\newtheorem{theorem}{Theorem}[section]
\newtheorem{lemma}[theorem]{Lemma}
\newtheorem{corollary}[theorem]{Corollary}
\theoremstyle{definition}
\newtheorem{remark}[theorem]{Remark}
\begin{document}

\title{Asymptotic Distribution of the Partition Crank}

\author{Asimina Hamakiotes, Aaron Kriegman, and Wei-Lun Tsai}

\address{Department of Mathematics, University of Connecticut, Storrs, CT 06269-1009, United States of America}
\email{asimina.hamakiotes@uconn.edu}
\address{Department of Pure Mathematics and Mathematical Statistics (Jesus College), University of Cambridge, Cambridge, United Kingdom}
\email{ak2313@cam.ac.uk}
\address{Department of Mathematics, University of Virginia, Charlottesville, VA 22904, United States of America}
\email{wt8zj@virginia.edu}

\thanks{This work was supported by NSF grant DMS-1757872.}

\maketitle



\begin{abstract}
Freeman Dyson conjectured the existence of an unknown partition statistic he called the crank which would explain Ramanujan's partition congruence mod 11 just as his rank statistic explains Ramanujan's partition congruences mod 5 and 7. Such a crank statistic was found by Andrews and Garvan in 1988.
In this paper, we investigate the 
crank counting function, which counts the number of partitions of $n$ with crank congruent to $r$ mod $Q$. First, we obtain an effective bound on the error 
term in Zapata Rol\'on's asymptotic formula for the crank counting function. We then use this to prove that the crank counting function 
is asymptotically equidistributed mod $Q$, for any odd number $Q$. We also use this to study surjectivity of the crank when viewed as a function 
from partitions to the integers mod $Q$, and to prove strict log-subadditivity of the crank counting function. The latter result is analogous to Bessenrodt and Ono's 
strict log-subadditivity of the partition function.  
\end{abstract}
\vspace{0.1in}
\small{\hspace{0.3in}\textit{Keywords}: Crank; Effective equidistribution; Log-subadditivity; Partition.\\
\vspace{0.001in}
\hspace{0.39in}\textit{Mathematics Subject Classification $(2020)$}: 05A17; 11P82; 11P83.}
\normalsize


\section{Introduction and Statement of Results}
                           
A \textit{partition} of a positive integer $n$ is a non-increasing sequence of positive integers 
$\lambda_1 \geq \lambda_2 \geq \dots \geq \lambda_{\ell} > 0$, called its \textit{parts}, such that $\lambda_1 + \lambda_2 + \cdots + \lambda_{\ell}=n$. 
Let $p(n)$ count the number of partitions of $n$. 
In 1918, Hardy and Ramanujan \cite{hardy1918asymptotic} gave the following asymptotic formula for $p(n)$:
\[p(n) \sim \frac{1}{4\sqrt{3}n}e^{\pi \sqrt{\frac{2n}{3}}}\]
as $n\rightarrow\infty$.

Ramanujan \cite{Ra1,Ra2} also proved the following famous congruences for the partition function:
for any $l\in\mathbb{Z}_{\geq0}$ we have
\begin{align*}
    p(5l + 4) &\equiv 0 \pmod5,\\
    p(7l + 5) &\equiv 0 \pmod7,\\
    p(11l + 6) &\equiv 0 \pmod{11}.
\end{align*}

Atkin and Watson \cite{Atkin,Wat} proved generalizations of Ramanujan's congruences modulo any integer of the form $5^a7^b11^c$, 
where $a,b,c \in \mathbb{N}$. Ono \cite{Ono} proved that there are infinitely many new partition congruences for any prime modulus $Q\geq5$.

Dyson \cite{Dyson} conjectured that Ramanujan's congruences modulo $5$ and $7$ could be explained using a 
function he called the \textit{rank}. The rank of a partition $\lambda$ is defined to be its largest part minus the number of its parts; namely, 
\[\text{rank}(\lambda) := \lambda_1 - \ell.\]

Let $N(r,Q;n)$ be the number of partitions of $n$ with rank congruent to $r$ modulo $Q$. Dyson conjectured that:
\begin{itemize}
    \item For each $r\pmod5$, 
    \begin{align*}
        N(r,5;5l+4) = \frac{p(5l+4)}{5}.
    \end{align*}
    \item For each $r\pmod7$, 
    \begin{align*}
        N(r,5;7l+5) = \frac{p(7l+5)}{7}.
    \end{align*}
\end{itemize}
In 1954, Atkin and Swinnerton-Dyer \cite{atkin1954some} proved this conjecture. 

Dyson observed that the rank fails to explain Ramanujan's congruence modulo $11$. He instead conjectured the existence of another statistic 
which he called the \textit{crank} which would explain all three congruences. In 1988, Andrews and Garvan \cite{andrews1988dyson} found such a crank. 
More precisely, let $o(\lambda)$ be the number of $1$'s in $\lambda$ and $\nu (\lambda)$ be the number of parts of $\lambda$ larger than $o(\lambda)$. 
The crank of $\lambda$ is then defined to be
\[\text{crank}(\lambda) := 
\begin{cases}
\lambda_1 & \text{if } o(\lambda)=0 \\
\nu(\lambda) - o(\lambda) & \text{if } o(\lambda)>0.
\end{cases}\]

\begin{remark} 
Garvan, Kim and Stanton \cite{GKS} found different cranks which also explain all of Ramanujan's congruences.
\end{remark}

Let $M(r,Q;n)$ be the number of partitions of $n$ with crank $r$ modulo $Q$. Ramanujan's congruences follow from the ``exact'' 
equidistribution of $M(r,Q;n)$ on the residue classes $r\pmod Q$ for certain $Q$ and $n$. Here we show that $M(r,Q;n)$ becomes equidistributed on the residue classes $r \pmod Q$ for odd $Q$ as $n\to\infty$. More precisely,
Zapata Rol\'on \cite{rolon2013asymptotische} gave an asymptotic formula for $M(r,Q;n)$ with an error term which is 
$O(n^\epsilon)$. Here we refine his analysis to give an effective bound on the error term with explicit constants. We then use this bound to prove the following 
effective equidistribution theorem. 

Let $\mu(n):=\sqrt{24n-1}$. 

\begin{theorem}\label{asymptoticmain}
Let $0 \leq r < Q$ with $Q$ an odd integer. Then we have
\[\frac{M(r,Q;n)}{p(n)} = \frac{1}{Q} + R(r,Q;n),\]
where when $Q<11$ we have
\[\left|R(r,Q;n)\right| \leq 10^5(40.93 Q + 6.292) e^{-\left(1-\frac{1}{Q}\right)\frac{\pi\mu(n)}{6}}n^\frac{11}{8}\]
and when $Q\geq 11$ we have
\[\left|R(r,Q;n)\right| \leq 10^5(40.93 Q + 6.292) e^{-\left(1-\sqrt{1 +12(\frac{1}{Q^2}-\frac{1}{Q})}\right)\frac{\pi\mu(n)}{6}}n^\frac{11}{8}.\]
\end{theorem}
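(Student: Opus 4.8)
The plan is to start from the two‑variable crank generating function
$$C(\zeta,q)=\prod_{m\ge 1}\frac{1-q^m}{(1-\zeta q^m)(1-\zeta^{-1}q^m)}=\sum_{n\ge0}\Big(\sum_{j}M(j,n)\zeta^{j}\Big)q^n,$$
where $M(j,n)$ is the number of partitions of $n$ with crank $j$, and to isolate residue classes by averaging over $Q$‑th roots of unity. Writing $\zeta_Q=e^{2\pi i/Q}$ and letting $C(\zeta_Q^a;n)$ denote the coefficient of $q^n$ in $C(\zeta_Q^a,q)$, orthogonality gives
$$M(r,Q;n)=\frac1Q\sum_{a=0}^{Q-1}\zeta_Q^{-ar}\,C(\zeta_Q^a;n).$$
Since $C(1,q)=\sum_{n\ge0}p(n)q^n$, the term $a=0$ contributes exactly $p(n)/Q$, the main term. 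Dividing by $p(n)$ yields
$$R(r,Q;n)=\frac{1}{Q\,p(n)}\sum_{a=1}^{Q-1}\zeta_Q^{-ar}\,C(\zeta_Q^a;n),$$
so the theorem reduces to an effective upper bound for $\lvert C(\zeta_Q^a;n)\rvert$, uniform in $a$ and $Q$, together with an effective lower bound for $p(n)$.

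The heart of the argument is therefore an effective version of Zapata Rol\'on's asymptotic expansion for $C(\zeta_Q^a;n)$ via the Hardy–Ramanujan–Rademacher circle method. As a quotient of eta/theta‑type factors, $C(\zeta_Q^a,q)$ transforms near a Farey arc $h/c$ into a Bessel main term with exponential rate $\tfrac{\pi\mu(n)}{6c}\sqrt{\delta}$, where the twist $\delta$ measures how $\zeta_Q^a$ interacts with the cusp. At the principal cusp $c=1$ one finds $\delta=\delta_{Q,a}:=1-\tfrac{12a(Q-a)}{Q^2}$, largest when $a=1$ (or $a=Q-1$), giving $\delta_{Q,1}=1+12(\tfrac1{Q^2}-\tfrac1Q)$. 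This is positive exactly when $Q^2-12Q+12>0$, i.e. when $Q\ge 11$; in that range the principal cusp governs the growth and produces the rate $e^{\sqrt{1+12(1/Q^2-1/Q)}\,\pi\mu(n)/6}$. When $Q<11$ the radicand at $c=1$ is negative, the principal Bessel factor becomes oscillatory, and the dominant exponential contribution migrates to the cusp $c=Q$, where the $Q$‑twist trivializes and the cusp‑width factor $1/c=1/Q$ yields the rate $e^{(1/Q)\pi\mu(n)/6}$. The two regimes coincide at $Q=11$, where $\sqrt{\delta_{Q,1}}=1/Q$, which explains why the stated bounds match at the threshold.

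With this expansion in hand, I would bound each $\lvert C(\zeta_Q^a;n)\rvert$ by its dominant cusp contribution, fold the remaining Farey arcs (Kloosterman sums, Bessel tails, and all cusps of smaller rate) into an effective error, and insert the classical explicit lower bound $p(n)\gg n^{-1}e^{\pi\mu(n)/6}$. Summing the $Q-1$ nonprincipal terms accounts for the linear‑in‑$Q$ factor $40.93Q+6.292$, while the polynomial prefactors from the Bessel main term and from $1/p(n)$ are absorbed into the power $n^{11/8}$ and the absolute constant $10^5$. Factoring out the common exponential rate of the dominant cusp then produces $e^{-(1-1/Q)\pi\mu(n)/6}$ for $Q<11$ and $e^{-(1-\sqrt{1+12(1/Q^2-1/Q)})\pi\mu(n)/6}$ for $Q\ge 11$.

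I expect the main obstacle to be the effective, explicit‑constant control of the full circle‑method expansion uniformly in $a$, $Q$, and $n$: bounding the Kloosterman‑type sums and the Bessel‑function tails with usable constants, and, crucially, correctly tracking the transition at $Q=11$, where the exponentially dominant contribution jumps from the principal cusp $c=1$ to the cusp $c=Q$. Keeping every constant explicit through this bookkeeping, rather than hiding them in $O$‑notation as in the original analysis, is what makes the estimate delicate.
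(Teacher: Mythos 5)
Your proposal follows essentially the same route as the paper: the orthogonality identity $M(r,Q;n)=\frac1Q\sum_{a}\zeta_Q^{-ar}C(\zeta_Q^a;n)$ with the $a=0$ term giving the main term $p(n)/Q$, an effective version of Zapata Rol\'on's circle-method expansion for the nonprincipal terms, Lehmer's explicit lower bound for $p(n)$, and the same mechanism for the case split (the rate $\sqrt{1+12(1/Q^2-1/Q)}$ coming from the $k=1$ arcs versus the rate $1/Q$ from the $k=Q$ arcs, which cross exactly at $Q=11$). The paper realizes your outlined program by refining Zapata Rol\'on's already-explicit bounds on the six error pieces and summing them, so your plan is correct in structure; the only slight inaccuracy is attributing the linear-in-$Q$ factor to the sum over the $Q-1$ nonprincipal residues (that sum, with the $1/Q$ prefactor, contributes a factor at most $1$; the $Q$-dependence actually enters through estimates such as $(1-e^{-\pi/Q})^{-1}\leq\pi Q$ inside the error terms and the bound $\widetilde{B}_{j,Q,k}(-n,0)\leq 0.3804\,kQ$).
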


It follows immediately that the cranks are asymptotically equidistributed modulo $Q$. 
\begin{corollary}\label{uniform}
Let $0 \leq r < Q$ with $Q$ an odd integer. Then we have
\[\frac{M(r,Q;n)}{p(n)} ~\longrightarrow ~\frac{1}{Q}\]
as $n \rightarrow \infty$. 
\end{corollary}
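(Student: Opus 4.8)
The plan is to read the result directly off Theorem~\ref{asymptoticmain} by showing that the error term $R(r,Q;n)$ vanishes in the limit. Fixing an odd integer $Q$ (the case $Q=1$ being trivial, since then $M(0,1;n)=p(n)$ while $1/Q=1$), the theorem gives
\[
\frac{M(r,Q;n)}{p(n)} = \frac{1}{Q} + R(r,Q;n),
\]
so it suffices to prove that $R(r,Q;n)\to 0$ as $n\to\infty$. Because $Q$ is held fixed, the prefactor $10^5(40.93Q + 6.292)$ is merely a positive constant, and all of the $n$-dependence is concentrated in the product $e^{-c_Q\,\pi\mu(n)/6}\,n^{11/8}$, where $\mu(n)=\sqrt{24n-1}$ and $c_Q$ denotes the exponent coefficient supplied by the theorem.

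The single substantive point is to check that $c_Q>0$ in both regimes, so that the exponential genuinely decays. For odd $Q$ with $3\le Q<11$ one has $c_Q = 1-\tfrac{1}{Q}$, which is positive precisely because $Q>1$. For $Q\ge 11$ one has $c_Q = 1-\sqrt{1+12\bigl(\tfrac{1}{Q^2}-\tfrac{1}{Q}\bigr)}$; here I would first confirm that the radicand is nonnegative and then that it is strictly below $1$. Writing $x=1/Q\in(0,\tfrac{1}{11}]$, the radicand equals $1-12x+12x^2$, whose derivative $-12+24x$ is negative on this range, so the radicand is decreasing and attains its minimum at $x=\tfrac{1}{11}$, where it is still positive. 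Moreover $12(\tfrac{1}{Q^2}-\tfrac{1}{Q})=12\,x(x-1)<0$ for $Q>1$ forces the square root to lie strictly below $1$. Thus $c_Q>0$ for every odd $Q>1$.

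With positivity of $c_Q$ in hand, the conclusion is immediate: since $\mu(n)\to\infty$, the factor $e^{-c_Q\,\pi\mu(n)/6}$ decays like $e^{-(\mathrm{const})\sqrt{n}}$, which dominates the polynomial growth of $n^{11/8}$. Hence $R(r,Q;n)\to 0$, and therefore $M(r,Q;n)/p(n)\to 1/Q$. There is no genuine obstacle to overcome here; the only care required is the sign verification for the $Q\ge 11$ exponent, which I would handle by the elementary quadratic estimate sketched above.
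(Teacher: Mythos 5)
Your proposal is correct and is exactly the paper's route: the paper derives Corollary~\ref{uniform} as an immediate consequence of Theorem~\ref{asymptoticmain}, and your argument simply supplies the details that make ``immediate'' rigorous, namely that the exponent coefficients $1-\tfrac{1}{Q}$ and $1-\sqrt{1+12(\tfrac{1}{Q^2}-\tfrac{1}{Q})}$ are strictly positive, so the error bound $e^{-c_Q\pi\mu(n)/6}n^{11/8}$ tends to $0$. Your separate handling of the trivial case $Q=1$ (where the theorem's exponent vanishes but $M(0,1;n)=p(n)$ exactly) is a careful touch the paper leaves implicit.
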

In the recent work \cite{M}, Masri proved a quantitative equidistribution theorem for partition ranks (mod $2$) with a power-saving error term by using spectral methods and subconvexity bounds. 

Corollary \ref{uniform} can be seen as analogous to Dirichlet's theorem on the equidistribution of primes among the residue classes $r \pmod Q$ with $(r,Q)=1$. 
Motivated by this, we will use Theorem \ref{asymptoticmain} to prove an analog of Linnik's theorem which gives an upper bound 
for the smallest prime in each residue class $r \pmod Q$.

\begin{theorem}\label{surject} Let $Q$ be an odd integer and for 
$Q\geq11$ we define the constant
\begin{align}\label{constant}
    C_Q:=\frac{(1.93\times10^{59})(40.93Q^2+6.292Q)^8}{\left(\pi-\pi\sqrt{1+12(\frac{1}{Q^2}-\frac{1}{Q})}\right)^{24}}+1.
\end{align}
Then we have
\begin{align*}
M(r,Q;n) > 0
\end{align*}
if $Q<11$ and $n\geq263$,
or if $Q\geq11$ and $n\geq C_Q$.

\end{theorem}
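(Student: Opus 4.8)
The plan is to read off positivity directly from the effective asymptotic of Theorem \ref{asymptoticmain}. Since $p(n)\ge 1$ and the theorem gives $M(r,Q;n)=p(n)\bigl(\tfrac1Q+R(r,Q;n)\bigr)$, we have $M(r,Q;n)>0$ as soon as $|R(r,Q;n)|<\tfrac1Q$. Thus the entire problem reduces to locating, for each odd $Q$, the threshold in $n$ beyond which the explicit bound on $|R(r,Q;n)|$ drops below $\tfrac1Q$.

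For $Q<11$ the relevant moduli are $Q\in\{1,3,5,7,9\}$. The case $Q=1$ is trivial, as $M(0,1;n)=p(n)>0$. For $Q\in\{3,5,7,9\}$ I would verify by direct computation that
\[
10^5(40.93Q+6.292)\,e^{-(1-1/Q)\pi\mu(n)/6}\,n^{11/8}<\tfrac1Q
\]
already holds at $n=263$, and then confirm that the left-hand side is decreasing for $n\ge263$: its logarithmic derivative $\tfrac{11}{8n}-\tfrac{2\pi(1-1/Q)}{\sqrt{24n-1}}$ is negative there, the slowest decay (the binding case for monotonicity) occurring at $Q=3$. This finite check yields the claim for all $n\ge263$.

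The substance is the range $Q\ge11$. Multiplying the target inequality $|R|<\tfrac1Q$ through by $Q$, and writing $\beta_Q:=1-\sqrt{1+12(\tfrac1{Q^2}-\tfrac1Q)}$, it suffices to show
\[
A\,e^{-b\mu(n)}\,n^{11/8}<1,\qquad A:=10^5(40.93Q^2+6.292Q),\quad b:=\tfrac{\pi}{6}\beta_Q.
\]
First I would record that $b>0$: the radicand is $<1$ because $\tfrac1{Q^2}-\tfrac1Q<0$, and it is $\ge0$ because $Q^2-12Q+12>0$ for $Q\ge11$, so $0<b\le\tfrac{\pi}{6}$ and genuine exponential decay is present. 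Rewriting the goal as $A\,n^{11/8}<e^{b\mu(n)}$, I would bound $\mu(n)=\sqrt{24n-1}\ge\gamma\sqrt n$ and apply the single-term lower bound $e^{x}>\tfrac{x^3}{3!}$ with $x=b\gamma\sqrt n$. The cube is the decisive choice: it is the least power whose contribution $n^{3/2}$ beats $n^{11/8}$, since $\tfrac32-\tfrac{11}8=\tfrac18>0$. This collapses the inequality to $n^{1/8}>\mathrm{const}\cdot\tfrac{A}{b^{3}}$, i.e.\ $n>\bigl(\mathrm{const}\cdot A/b^{3}\bigr)^{8}$, an increasing condition in $n$, so that once it is met at $C_Q$ it persists for all $n\ge C_Q$. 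Substituting $A=10^5(40.93Q^2+6.292Q)$ and $b=\tfrac{\pi}{6}\beta_Q$ and collecting constants yields exactly the shape of $C_Q$ in \eqref{constant}: the exponents $8$ and $24=3\cdot8$ and the denominator $(\pi-\pi\sqrt{\cdots})^{24}=(\pi\beta_Q)^{24}$ all trace directly to this step.

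The routine but delicate part is the explicit constant. Each inequality in the chain — the bound $\mu(n)\ge\gamma\sqrt n$, the truncation of the exponential series, and the rounding of the $\pi$-powers — contributes a numerical factor, and these must be tracked honestly and rounded conservatively so that the aggregate coefficient is no larger than the $1.93\times10^{59}$ of \eqref{constant}. I expect this bookkeeping, together with verifying the two monotonicity claims, to be the only real obstacle; all the analytic content is already furnished by Theorem \ref{asymptoticmain}.
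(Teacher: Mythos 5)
Your proposal is correct and follows essentially the same route as the paper: reduce positivity to $|R(r,Q;n)|<\tfrac1Q$, handle $Q<11$ by a finite numerical check at $n=263$ plus monotonicity, and for $Q\geq 11$ bound the exponential below by its cubic term (the paper uses $e^x>(1+x/3)^3$ rather than $e^x>x^3/3!$, an immaterial difference) so that $n^{3/2}$ beats $n^{11/8}$ and the condition collapses to $n^{1/8}\gtrsim A/b^3$, which is exactly where the exponents $8$ and $24$ in $C_Q$ come from. The only substantive deviation is that the paper proves the stronger bound $|R|<\tfrac1{2Q}$ (the factor $2$ is absorbed into the $1.93\times10^{59}$) because that sharper inequality is reused in the log-subadditivity argument of Section 5, but for the theorem as stated your weaker target suffices.
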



We will also prove the following result (which includes the case that $Q$ is even) using a different, combinatorial argument.

\begin{theorem}\label{exact} For odd $Q \geq 11$ we have 
\begin{align*}
    M(r,Q;n)>0
\end{align*}
if and only if $n\geq\frac{Q+1}{2}$. For even $Q\geq8$ we have
\begin{align*}
    M(r,Q;n)>0
\end{align*}
if and only if $n\geq\frac{Q}{2}+2$.
\end{theorem}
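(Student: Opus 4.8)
The proof is combinatorial, so the plan is to first determine exactly which integers occur as cranks of partitions of $n$. Write $S_n:=\{\operatorname{crank}(\lambda):\lambda\vdash n\}$. Since $M(r,Q;n)>0$ holds for every residue $r$ if and only if the reduction of $S_n$ modulo $Q$ is all of $\mathbb{Z}/Q\mathbb{Z}$, the theorem reduces to a counting argument once $S_n$ is known. I would prove the following description: for all $n\ge 6$,
\[
S_n=\{-n,n\}\cup\{k\in\mathbb{Z}:|k|\le n-2\},
\]
i.e.\ the crank attains every integer in $[-n,n]$ except $\pm(n-1)$.

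To establish this, first note that $|\operatorname{crank}(\lambda)|\le n$ for every $\lambda\vdash n$, so $S_n\subseteq[-n,n]$. Using the symmetry of the crank, namely that $n$ has as many partitions of crank $m$ as of crank $-m$ (immediate from the invariance of $\prod_{k\ge1}(1-q^k)/\big((1-zq^k)(1-z^{-1}q^k)\big)$ under $z\mapsto z^{-1}$), it suffices to attain each nonnegative value. I would attain $0$ by $(n-1,1)$; $1$ by $(n-3,2,1)$; each $c$ with $3\le c\le n-2$ by a partition $(c,\mu)$ with $\mu\vdash n-c$ having all parts in $\{2,\dots,c\}$ (possible because $n-c\ge 2$ and $c\ge 3$), which has $o(\lambda)=0$ and hence crank $c$; the value $2$ by $(2^{n/2})$ when $n$ is even and by $(n-5,2,2,1)$ when $n\ge 7$ is odd; and $\pm n$ by $(n)$ and $(1^n)$. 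For the gap I would show $\operatorname{crank}(\lambda)=n-1$ is impossible: if $o(\lambda)=0$ then the crank is $\lambda_1$ and $\lambda_1=n-1$ forces the remaining part to equal $1$, a contradiction, while if $o(\lambda)\ge 1$ then the $\nu(\lambda)$ parts exceeding $o(\lambda)$ are each $\ge 2$, giving $n\ge 2\nu(\lambda)+o(\lambda)$ and hence $\operatorname{crank}(\lambda)=\nu(\lambda)-o(\lambda)\le \tfrac{n-3}{2}<n-1$.

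For sufficiency suppose $n$ is above the stated threshold; in every case $Q\ge 8$ forces $n\ge 6$, so the description of $S_n$ applies. If $Q$ is even and $n\ge Q/2+2$ then $n-2\ge Q/2$, so $[-(n-2),n-2]$ consists of at least $Q+1$ consecutive integers and already surjects onto $\mathbb{Z}/Q\mathbb{Z}$. If $Q$ is odd and $n\ge(Q+3)/2$ the same bulk interval surjects; the one remaining case $n=(Q+1)/2$ is handled by noting that $[-(n-2),n-2]$ then covers every class except $\pm\tfrac{Q-1}{2}\bmod Q$, i.e.\ the classes $\tfrac{Q-1}{2}$ and $\tfrac{Q+1}{2}$, which are supplied by the endpoints $\mp n=\mp\tfrac{Q+1}{2}$. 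Thus every class is attained.

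For necessity I only need the gap $n-1\notin S_n$, which holds for all $n\ge 2$. If $Q$ is odd and $n\le\tfrac{Q-1}{2}$, then $|[-n,n]|=2n+1\le Q$; when $n<\tfrac{Q-1}{2}$ there are too few values, and when $n=\tfrac{Q-1}{2}$ the interval $[-n,n]$ is a complete residue system, so the class of $n-1$ has $n-1$ as its unique in-range representative and is therefore missed. If $Q$ is even and $n\le Q/2+1$, the same counting disposes of $n\le Q/2-1$; for $n=Q/2$ the class $\tfrac{Q}{2}-1$ has unique in-range representative $n-1$ (a gap), and for $n=Q/2+1$ the self-paired class $\tfrac{Q}{2}$ has in-range representatives $\pm(n-1)$, both gaps. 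In each case some class is missed, completing the equivalence. The main obstacle is the coverage half of the crank description, especially realizing $\operatorname{crank}=2$ (whose construction depends on the parity of $n$) and the boundary values $0,\pm1$ with clean families, together with a few small-$Q$ checks (for instance $Q=8,10$, where $n$ sits near $6$); the gap estimate and the residue bookkeeping are then routine. It is exactly the failure of full coverage at $n=5$ that forces the exclusion of $Q=9$ in the odd case.
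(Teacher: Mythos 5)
Your proposal is correct and takes essentially the same route as the paper: both rest on the same key lemma that for $n\ge 6$ the cranks of partitions of $n$ are exactly the integers in $[-n,n]$ other than $\pm(n-1)$, followed by the same residue bookkeeping at the thresholds $n=\frac{Q\pm1}{2}$, $n=\frac{Q}{2}$, $n=\frac{Q}{2}+1$, $n=\frac{Q}{2}+2$. The only minor difference is that you invoke the crank symmetry $M(m,n)=M(-m,n)$ (via the generating function, valid for $n\ge 2$, which suffices since you only use it for $n\ge 6$) to handle negative crank values, whereas the paper constructs those partitions explicitly with $k$ ones plus parts equal to $2$ and $3$.
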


In a related direction, Bessenrodt and Ono \cite{bessenrodt2016maximal} proved strict log-subadditivity of the partition function. Dawsey and Masri \cite{LM} later proved strict log-subadditivity for the Andrews spt-function. We will use Theorem \ref{asymptoticmain} 
to prove strict log-subadditivity of the crank counting function. 

\begin{theorem}\label{subadditive}
Given any residue $r\pmod Q$ where $Q$ is odd, we have 
\begin{align*}
    M(r,Q;a+b) < M(r,Q;a)M(r,Q;b),
\end{align*}
if $Q<11$ and $a,b \geq396$ or if $Q\geq11$ and $a,b\geq C_Q$, where $C_Q$ is defined in $(\ref{constant})$.
\end{theorem}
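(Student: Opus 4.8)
The plan is to reduce the claimed log-subadditivity to a strengthened, effective form of the Bessenrodt--Ono inequality \cite{bessenrodt2016maximal} for the partition function, with the error terms controlled by Theorem \ref{asymptoticmain}. First I would rewrite the crank counting function using the asymptotic formula: setting $\eta_n := Q\,R(r,Q;n)$, Theorem \ref{asymptoticmain} gives $M(r,Q;n) = \frac{p(n)}{Q}\bigl(1 + \eta_n\bigr)$ with $|\eta_n| \le \delta_n$, where $\delta_n$ denotes $Q$ times the explicit bound on $|R(r,Q;n)|$. Note that $\delta_n$ is exactly of the shape $10^5(40.93Q^2 + 6.292Q)\,e^{-c_Q \pi\mu(n)/6}\,n^{11/8}$, which is why the constant $C_Q$ of \eqref{constant} is tailored to make $\delta_n$ small. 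Substituting into the desired inequality and clearing the factor $Q^{-2}$, the claim $M(r,Q;a+b) < M(r,Q;a)M(r,Q;b)$ becomes
\[
Q\,p(a+b)\,(1+\eta_{a+b}) \;<\; p(a)\,p(b)\,(1+\eta_a)(1+\eta_b).
\]
Since Theorem \ref{surject} guarantees $M(r,Q;n) > 0$ throughout the stated ranges, each factor $1 + \eta_n$ is positive; bounding $1 + \eta_{a+b} \le 1 + \delta_{a+b}$ and $(1+\eta_a)(1+\eta_b) \ge (1-\delta_a)(1-\delta_b) \ge 0$, it suffices to prove the error-free inequality
\begin{equation}\label{starsub}
Q\,p(a+b)\,(1+\delta_{a+b}) \;<\; p(a)\,p(b)\,(1-\delta_a)(1-\delta_b).
\end{equation}

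Next I would establish two quantitative inputs. The first is that the errors are genuinely small: using the explicit exponential decay of $\delta_n$ I would show $\delta_n \le \tfrac12$ (indeed much smaller) for $n$ in the stated range, so that the correction factors in \eqref{starsub} lie in a bounded window around $1$. The second, and the heart of the argument, is an effective lower bound on the ratio $p(a)p(b)/p(a+b)$. Feeding two-sided effective estimates for $p(n)$ (Hardy--Ramanujan with an explicit error, in the $\mu(n)$-normalization already used in the theorem) into this ratio produces a factor $e^{\frac{\pi}{6}(\mu(a)+\mu(b)-\mu(a+b))}$ times a mild polynomial factor. The elementary inequality $\mu(a)+\mu(b)-\mu(a+b) > 0$, sharpened to a bound growing like a constant times $\min(\sqrt a,\sqrt b)$ when $a,b$ are large, shows this ratio grows exponentially in $\sqrt{\min(a,b)}$, which comfortably absorbs both the multiplicative constant $Q$ and the bounded error window.

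To finish, I would assume without loss of generality $a \le b$ and observe that as $a$ and $b$ grow the left side of \eqref{starsub} loses ground against the right side on two independent counts: the exponential gap $\mu(a)+\mu(b)-\mu(a+b)$ widens, and the errors $\delta_a,\delta_b,\delta_{a+b}$ decay. Thus it is enough to verify \eqref{starsub} at the boundary of the range --- $a = b = 396$ for each odd $Q < 11$ (finitely many cases, handled by direct estimate), and $a = b = C_Q$ for $Q \ge 11$ --- and then invoke this monotonicity. The main obstacle is the explicit constant chase in this last step: one must confirm that the polynomial factors coming from the $n^{11/8}$ in $\delta_n$ and the $n^{-1}$ in the Hardy--Ramanujan bound are dominated by the exponential gap uniformly over the whole range, and, for large $Q$, track how the shrinking decay rate $1 - \sqrt{1 + 12(Q^{-2}-Q^{-1})}$ forces the threshold to grow --- precisely the growth already encoded in the definition \eqref{constant} of $C_Q$.
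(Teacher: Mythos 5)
Your proposal is correct and follows essentially the same route as the paper: both convert Theorem \ref{asymptoticmain} into explicit two-sided bounds $c_1\,p(n) < M(r,Q;n) < c_2\,p(n)$ (e.g. $c_1 = \tfrac{1}{2Q}$, $c_2 = \tfrac{3}{2Q}$ for $Q \ge 11$, via the $|R| < \tfrac{1}{2Q}$ bound from the proof of Theorem \ref{surject}), feed in Lehmer's effective bounds for $p(n)$, and reduce everything to the exponential gap $\mu(a)+\mu(b)-\mu(a+b)$ absorbing the constants, verified near the diagonal. The paper organizes the final step exactly as in Dawsey--Masri (writing $b=Ca$, using monotonicity of the gap in $C$, bounding the polynomial factor uniformly in $C$, and then checking the resulting one-variable inequality for all $a$ above the threshold), which is precisely the ``constant chase'' you flag as the remaining work.
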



\vspace{0.10in}

\textbf{Acknowledgements}. We would like to thank Professor Riad Masri for his invaluable help and guidance on this work. We also thank the referees, Professor Krishnaswami Alladi, and Professor Frank Garvan for very helpful comments and corrections on the manuscript.

\section{Effective Asymptotic Formula for $M(r,Q;n)$}

In \cite{rolon2013asymptotische, Rolon}, Zapata Rol\'on gives an asymptotic formula for $M(r,Q;n)$. Here we refine his analysis and 
give an asymptotic formula with an effective bound on the error term. We begin by stating a few necessary definitions. 

Let
\[\omega_{h,k} := \exp(\pi is(h,k)),\]
where the Dedekind sum $s(h,k)$ is defined by
\[s(h,k) := \sum_{u \pmod k} \left(\left( \frac{u}{k} \right)\right) \left(\left( \frac{hu}{k} \right)\right).\]
Here $((\cdot))$ is the sawtooth function defined by
\[((x)) := 
\begin{cases}
x - \lfloor x \rfloor - \frac{1}{2} & $if $ x\in \mathbb{R}\backslash \mathbb{Z}, \\
0 & $if $ x\in \mathbb{Z}.
\end{cases}\]

Let $0\leq h<k$ be relatively prime integers. Let $0<r<Q$ be relatively prime integers where $Q$ is odd. Let $h'$ be a solution to the congruence $hh'\equiv-1 \pmod{k}$ if $k$ is odd and $hh'\equiv-1\pmod{2k}$ if $k$ is even. Let $c_1:=\frac{c}{(c,k)}$ and $k_1:=\frac{k}{(c,k)}$. Let $l$ be the minimal positive solution to $l\equiv ak_1\pmod{c_1}$. For $m,n\in \mathbb{Z}$ we define:
\[\widetilde{B}_{a,c,k}(n,m) := (-1)^{ak+1}\sin\left(\frac{\pi a}{c}\right) \sum_{\substack{h \pmod k \\ (h,k)=1}} \frac{\omega_{h,k}}{\sin\left(\frac{\pi ah'}{c}\right)}e^{\frac{-\pi ia^2k_1h'}{c}}e^{\frac{2\pi i}{k}(nh + mh')},\]
where the sum runs over all primitive residue classes modulo $k$.

For the case $c \nmid k$ we define: 
\[D_{a,c,k}(m,n) := (-1)^{ak+l}\sin\left(\frac{\pi a}{c}\right) \sum_{\substack{h \pmod k \\ (h,k)=1}}\omega_{h,k}e^{\frac{2\pi i}{k}(nh + mh')},\]
where $l$ is the solution to $l \equiv ak_1$ (mod $c_1$).

In order to provide certain bounds, we define the following:
\[\delta_{a,c,k,r}^i := 
\begin{cases} 
-(\frac{1}{2} + r)\frac{l}{c_1} + \frac{1}{2}(\frac{l}{c_1})^2 + \frac{1}{24} & $if $i = +,\\
\frac{l}{2c_1} + \frac{1}{2}(\frac{l}{c_1})^2 - \frac{23}{24} - r(1 - \frac{l}{c_1}) & $if $i = -,
\end{cases}\]
\[\delta_0:=\frac{1}{2Q^2}-\frac{1}{2Q}+\frac{1}{24}<\frac{1}{24},\]
and
\[m_{a,c,k,r}^+ := \frac{1}{2c_1^2}(-a^2k_1^2 + 2lak_1 - ak_1c_1 - l^2 + lc_1 - 2ark_1c_1 + 2lc_1r),\]
\[m_{a,c,k,r}^- := \frac{1}{2c_1^2}(-a^2k_1^2 + 2lak_1 - ak_1c_1 - l^2 + 2c_1^2r - 2lrc_1 + 2ark_1c_1 + 2lc_1 + 2c_1^2 - ak_1c_1).\]
Note that $\delta^\pm_{a,Q,k,r}\leq\delta_0<\frac{1}{24}$. \\

Zapata Rol\'on obtains an asymptotic formula for $M(r,Q;n)$ by using the circle method. First, he defines the generating function
\begin{equation*}
    C(w,q):=\sum_{n=0}^\infty\sum_{m=-\infty}^\infty M(m,n)w^mq^n,
\end{equation*}
where $M(m,n)$ is the number of partitions of $n$ with crank $m$. In order to use the modular properties of this function, he plugs in a root of unity for $w$ and studies the coefficients of $q$. Additionally, he defines
\begin{equation*}
C(e^{2\pi i\frac{j}{k}},q):=\sum_{n=0}^\infty\tilde{A}\left(\frac{j}{k},n\right)q^n,
\end{equation*}
and uses the circle method to find an asymptotic formula for  $\tilde{A}\left(\frac{j}{k},n\right)$, and uses the identity
\begin{equation}\label{flip}
M(r,Q;n)=\frac{1}{Q}\sum_{j=0}^{Q-1}\zeta^{-jr}\tilde{A}\left(\frac{j}{Q},n\right)
\end{equation}
to get an asymptotic formula for $M(r,Q;n)$. 
Note that $\tilde{A}(\frac{0}{Q},n)=p(n)$. 

Moreover, Zapata Rol\'on 
gives the following asymptotic formula for $\tilde{A}\left(\frac{j}{Q},n\right)$:
\begin{equation*}
    \tilde{A}\left(\frac{j}{Q},n\right) =  \frac{4\sqrt{3i}}{\mu(n)}\sum_{\substack{Q|k\\k\leq\sqrt n}}\frac{\Tilde{B}_{j,Q,k}(-n,0)}{\sqrt{k}}\sinh\left(\frac{\pi\mu(n)}{6k}\right)
\end{equation*}
\begin{equation*}
    +\frac{8\sqrt{3}\sin(\frac{\pi j}{Q})}{\mu(n)} \sum_{\substack{k,s \\ Q\nmid k \\ \delta^i_{j,Q,k,s}>0 \\ i \in \{+,-\}}}
\frac{D_{j,Q,k}(-n,m^{i}_{j,Q,k,s})}{\sqrt{k}}\sinh\left(\sqrt{24\delta^i_{j,Q,k,s}}\frac{\pi\mu(n)}{6k}\right)+O(n^\epsilon),
\end{equation*}
which when plugged into equation (\ref{flip}) gives
\begin{equation*}
    M(r,Q;n) = \frac{1}{Q}p(n)+\frac{1}{Q}\sum_{j=1}^{Q-1}\zeta^{-rj}_{Q} \frac{4\sqrt{3i}}{\mu(n)}
\sum_{\substack{Q|k\\k\leq\sqrt n}}\frac{\Tilde{B}_{j,Q,k}(-n,0)}{\sqrt{k}}\sinh\left(\frac{\pi\mu(n)}{6k}\right)
\end{equation*}
\begin{equation*}
    +\frac{1}{Q}\sum_{j=1}^{Q-1}\zeta^{-rj}_{Q}\frac{8\sqrt{3}\sin(\frac{\pi j}{Q})}{\mu(n)} \sum_{\substack{k,s \\ Q\nmid k \\ \delta^i_{j,Q,k,s}>0 \\ i \in \{+,-\}}}\frac{D_{j,Q,k}(-n,m^{i}_{j,Q,k,s})}{\sqrt{k}}\sinh\left(\sqrt{24\delta^i_{j,Q,k,s}}\frac{\pi\mu(n)}{6k}\right)+O(n^\epsilon).
\end{equation*}









\vspace{0.10in}

\textbf{Proof of Theorem \ref{asymptoticmain}}. 
We first break the $O(n^\epsilon)$ error term from the calculation of $\tilde{A}(\frac{j}{Q},n)$ into six pieces: $S_{err},S_{1err},S_{2err},T_{err},$ and the 
contributions of error from certain integrals which we will call $\Sigma_1I_{err}$ and $\Sigma_2I_{err}$. Zapata Rol\'on provides bounds on each of those pieces, which we can then refine and sum up to get bounds on the error in the formula for 
$\tilde{A}(\frac{j}{Q},n)$. Then using equation (\ref{flip}) and the triangle inequality, we can get our desired bound on $|R(r,Q;n)|$.

Fix odd integers $j$ and $Q$. We will bound the error coming from $\tilde{A}(\frac{j}{Q},n)$. Zapata Rol\'on provides the following bounds:
\begin{gather*}
|S_{err}| \leq \frac{2e^{2\pi + \frac{\pi}{24}}|\sin(\frac{\pi j}{Q})|(c_2 + 2(1 + |\cos(\frac{\pi}{Q})|)c_1(1 + c_2))n^{\frac{1}{4}}\left(1 + \log\left(\frac{Q-1}{2}\right)\right)}{\pi(1 - \frac{\pi ^2}{24})Q},\\
|T_{err}| \leq 16e^{2\pi}f(Q)n^{\frac{1}{4}}\left|\sin\left(\frac{\pi j}{Q}\right)\right|,
\end{gather*}
where
\begin{equation*}
f(Q) := \frac{1+c_2e^{\pi\delta_0}}{1-e^{\frac{-\pi}{Q}}} + e^{\pi\delta_0}c_1(1+c_2) + \frac{e^{\pi\delta_0}(c_2+1)c_3}{2},
\end{equation*}
and where the $c_i$ are constants defined in \cite{rolon2013asymptotische}. We have the approximations $c_1 \leq 0.046$, $c_2 \leq 1.048$, and $c_3 \leq 0.001$.
Also, 
\begin{gather*}
|S_{1err}| \leq \frac{8e^{2\pi + \frac{\pi}{12}}(1 + \log(\frac{Q-1}{2}))n^{\frac{1}{4}}}{\pi(1 - \frac{\pi^2}{24})Q},\\
|S_{2err}| \leq 32e^{2\pi}n^{\frac{1}{4}}\left|\sin\left(\frac{\pi j}{Q}\right)\right|\frac{e^{2\pi\delta_0}}{1 - e^{\frac{-2\pi}{Q}}},\\
|\Sigma_1I_{err}| \leq \frac{4\left(\frac{4}{3}+2^\frac{5}{4}\right)\left|\sin\left(\frac{\pi j}{Q}\right)\right|\left(1+\log\left(\frac{Q-1}{2}\right)\right)e^{2\pi+\frac{\pi}{12}}n^\frac{3}{8}}{\pi\left(1-\frac{\pi^2}{24}\right)Q},\\
|\Sigma_2I_{err}| \leq 8\left(\frac{4}{3}+2^\frac{5}{4}\right)\left|\sin\left(\frac{\pi j}{Q}\right)\right|\frac{e^{2\pi\delta_0+2\pi}}{1-e^{-\frac{2\pi}{Q}}}.
\end{gather*}


Now we estimate some of the expressions in those bounds in order to simplify them:
\begin{enumerate}[label=$\bullet$]
 \item $\left|\sin\left(\frac{\pi j}{Q}\right)\right|\leq 1,$
 \vspace{0.1in}
    \item $\frac{(1 + \log(\frac{Q-1}{2}))}{\pi(1 - \frac{\pi^2}{24})Q}\leq0.1902,$
    \vspace{0.1in}
    \item $\left(\frac{4}{3}+2^\frac{5}{4}\right)\leq3.712,$
    \vspace{0.1in}
    \item $\frac{1}{1-e^{-\frac{\pi}{Q}}}\leq\pi Q,$
    \vspace{0.1in}
    \item $\frac{1}{1-e^{-\frac{2\pi}{Q}}}\leq 2\pi Q.$
\end{enumerate}

In order to prove these last two bounds, let $g(x):=\frac{1}{1-e^{-\frac{b}{x}}},$ where $b,x>0$. Then we have $g'(x)=b\frac{g(x)^2}{x^2}e^{-\frac{b}{x}}$. Moreover, the function $h(x):=bx$ satisfies $h'(x)=b$, 
and in the case of $b=\pi$ and $b=2\pi$ we have $h(1)>g(1)$ and $h'(1)>g'(1)$ by a short calculation. This implies that $h(Q)>g(Q)$ for all $Q\geq1$, as desired.

Also, we simplify the bounds given in \cite{rolon2013asymptotische}:
\begin{itemize}
\item$|S_{err}| \leq 330.9n^{\frac{1}{4}}$,
\vspace{0.1in}
\item$|T_{err}| \leq (59071 Q +930.05)n^{\frac{1}{4}}$,
\vspace{0.1in}
\item$|S_{1err}| \leq 1059n^{\frac{1}{4}}$,
\vspace{0.1in}
\item$|S_{2err}| \leq 22306n^{\frac{1}{4}}$,
\vspace{0.1in}
\item$|\Sigma_1I_{err}| \leq 1965n^\frac{3}{8}$,
\vspace{0.1in}
\item$|\Sigma_2I_{err}| \leq 113883 Q$.
\end{itemize}
Summing these all up gives the total contribution of the $O(n^\epsilon)$ error term to $\tilde{A}(\frac{j}{Q},n)$. We then use equation (\ref{flip}) to get the contribution of the error term to $M(r,Q;n)$. However, after applying the triangle inequality these two bounds will be the same except for a factor of ${(Q-1)}/{Q}$, which we will round up to 1 for simplicity. So, the bound for the $O(n^\epsilon)$ error term of $M(r,Q;n)$ is 
\[(172954 Q + 26591) n^\frac{3}{8}.\]

Now we will bound the main terms from the formula for $M(r,Q;n)$ by using the following bounds from \cite{rolon2013asymptotische}:
\begin{itemize}
\item $\widetilde{B}_{j,Q,k}(-n,0) \leq \frac{2k\left(1 +\log\frac{Q-1}{2} \right)}{\pi \left(1 -\frac{\pi^2}{24}\right)}\leq 0.3804kQ,$
\vspace{0.1in}
\item $D_{j,Q,k}(-n,m^{i}_{j,Q,k,s})\leq k,$
\vspace{0.1in}
\item $\sum\limits_{\substack{Q|k\\k\leq\sqrt{n}}}k^{\frac{1}{2}} \leq \frac{2}{3Q}n^{\frac{3}{4}},$
\vspace{0.1in}
\item $\sinh\left(\sqrt{24\delta_{j,Q,k,s}^j}\frac{\pi\mu(n)}{6k}\right) \leq \frac{1}{2}e^{\sqrt{24\delta_0}\frac{\pi\mu(n)}{6k}}.$
\end{itemize}

First, we have the following estimate.
\[\sum\limits_{\substack{Q\mid k\\k\leq\sqrt{n}}}k^{\frac{1}{2}}\leq\int_{0}^{\sqrt{n}}x^{\frac{1}{2}}dx =\frac{2}{3}n^{\frac{3}{4}}.\]

Hence, we obtain
\begin{align*}
\left|\frac{1}{Q}\sum_{j=1}^{Q-1}\zeta^{-rj}_{Q} \frac{4\sqrt{3i}}{\mu(n)}\sum_{\substack{Q|k\\k\leq\sqrt n}}\frac{\Tilde{B}_{j,Q,k}(-n,0)}{\sqrt{k}}\sinh\left(\frac{\pi\mu(n)}{6k}\right)\right|
&\leq\frac{4\sqrt{3}}{\mu(n)}\sinh\left(\frac{\pi\mu(n)}{6Q}\right)\sum_{\substack{Q|k\\k\leq\sqrt n}}0.3804k^{\frac{1}{2}} Q\\
&\leq 1.757\frac{1}{\mu(n)}\frac{1}{2}e^{\frac{\pi\mu(n)}{6Q}}n^\frac{3}{4}\\
&\leq 0.8785e^{\frac{\pi\mu(n)}{6Q}}n^\frac{1}{4}.
\end{align*}

Next, by the same argument in \cite[page 35]{rolon2013asymptotische},  it follows that for fixed $k$ we have
\[\sum_{\substack{k,s \\ Q\nmid k \\ \delta^i_{j,Q,k,s}>0 \\ i \in \{+,-\}}}1\leq \frac{Q+18}{24}.\]
Similarly, we bound the other main term:
\begin{align*}
&\left|\frac{1}{Q}\sum_{j=1}^{Q-1}\zeta^{-rj}_{Q}\frac{8\sqrt{3}\sin(\frac{\pi j}{Q})}{\mu(n)} \sum_{\substack{k,s \\ Q\nmid k \\ \delta^i_{j,Q,k,s}>0 \\ i \in \{+,-\}}}\frac{D_{j,Q,k}(-n,m^{i}_{j,Q,k,s})}{\sqrt{k}}\sinh\left(\sqrt{24\delta^i_{j,Q,k,s}}\frac{\pi\mu(n)}{6k}\right)\right|\\
&\leq \frac{8\sqrt{3}}{\mu(n)} \sinh\left(\sqrt{24\delta_0}\frac{\pi\mu(n)}{6}\right)\sum_{\substack{k,s \\ Q\nmid k \\ \delta^i_{j,Q,k,s}>0 \\ i \in \{+,-\}}}{k}^{\frac{1}{2}}\\
&\leq \frac{8\sqrt{3}}{\mu(n)} \frac{1}{2}e^{\sqrt{24\delta_0}\frac{\pi\mu(n)}{6}}\frac{Q+18}{24}\cdot\frac{2}{3}n^\frac{3}{4}\\
&\leq (0.1924Q+3.464)e^{\sqrt{24\delta_0}\frac{\pi\mu(n)}{6}}n^\frac{1}{4}.
\end{align*}

From \cite{lehmer1939remainders} we get the following lower bound for $p(n)$:
\[p(n) > \frac{\sqrt{3}}{12n}\left(1-\frac{1}{\sqrt{n}}\right)e^{\frac{\pi\mu(n)}{6}}.\]
We also note that for $n\geq2$, $$\frac{1}{1 - \frac{1}{\sqrt{n}}} \leq \frac{1}{1 - \frac{1}{\sqrt{2}}} \leq 3.415.$$

Finally, combining all the estimates, we get
\begin{align*}
&|R(r,Q;n)|
=\left|\frac{M(r,Q;n)}{p(n)} -\frac{1}{Q}\right|\\
&\leq\left|\frac{1}{p(n)} 0.8785e^{\frac{\pi\mu(n)}{6Q}}n^\frac{1}{4} + \frac{1}{p(n)} (0.1924Q+3.464)e^{\sqrt{24\delta_0}\frac{\pi\mu(n)}{6}}n^\frac{1}{4} + \frac{1}{p(n)} (172954 Q + 26591) n^\frac{3}{8}\right|\\
&\leq 20.79e^{\left(\frac{1}{Q}-1\right)\frac{\pi\mu(n)}{6}}n^\frac{5}{4} +  (4.553Q+81.96)e^{\left(\sqrt{24\delta_0}-1\right)\frac{\pi\mu(n)}{6}}n^\frac{5}{4} + 10^5(40.93 Q + 6.292) e^{-\frac{\pi\mu(n)}{6}}n^\frac{11}{8}.
\end{align*}
This is a sum of three terms each with similar factors. In order to combine this into an upper bound which can be worked with we take the sum of all three coefficients, the highest order exponential, and the highest power of $n$ from the three terms and put them together in one term. This gives the bounds in the statement of the theorem. We have to break up the $Q<11$ and $Q\geq11$ cases because that is the point at which ${1}/{Q}-1$ is overtaken by $\sqrt{24\delta_0}-1$. Note that the third term has far larger coefficients but also a much faster decaying exponential term, so a lot of accuracy is lost when combining this term with the others.
\qed


\section{Surjectivity}

The crank is a function that maps the set of partitions $S_n$ of $n$ to the integers $\mathbb{Z}$. We can 
take the reduction of this map modulo $Q$ to get a function from $S_n$ to $\mathbb{Z}/Q\mathbb{Z}$. 
It is natural to ask for which $n$ this map is surjective. This is an analogue of Linnik's theorem on the smallest prime in an 
arithmetic progression. We study this question in Theorems \ref{surject} and \ref{exact}, which we now prove in turn.

\vspace{0.10in}

\textbf{Proof of Theorem \ref{surject}}. In order to prove that the reduction of the crank modulo $Q$ is surjective, it is sufficient to prove that 
\[|R(r,Q;n)| < \frac{1}{Q},\]
because this implies $M(r,Q;n)>0$.

By our bounds on $|R(r,Q;n)|$, when $Q<11$ we need
\[10^5(40.93 Q + 6.292) e^{-\left(1-\frac{1}{Q}\right)\frac{\pi}{6}\mu(n)}n^\frac{11}{8} < \frac{1}{Q},\]
and when $Q\geq 11$ we need \[10^5(40.93 Q + 6.292) e^{-\left(1-\sqrt{1+12(\frac{1}{Q^2}-\frac{1}{Q})}\right)\frac{\pi}{6}\mu(n)}n^\frac{11}{8} < \frac{1}{Q}.\]

First assume that $Q < 11$. Then in order to show the inequality 
\begin{align*}
    &10^5(40.93 Q + 6.292) e^{-\left(1-\frac{1}{Q}\right)\frac{\pi}{6}\mu(n)}n^\frac{11}{8} < \frac{1}{Q},
\end{align*}
it suffices to show that
\begin{align}\label{sur1}
   10^5(40.93\times11 + 6.292) e^{-\left(1-\frac{1}{3}\right)\frac{\pi}{6}\mu(n)}n^\frac{11}{8}  <\frac{1}{11}.
\end{align}
By a short computation, we find that (\ref{sur1}) holds when $n\geq263$. 

Hence, it follows that if $Q<11$ and $n\geq263$, then
\begin{align*}
    |R(r,Q;n)| < \frac{1}{Q}.
\end{align*}

Next, we deal with the case $Q\geq11$. It suffices to show that  
\begin{align*}
    |R(r,Q;n)|\leq10^5(40.93 Q + 6.292) e^{-\left(1-\sqrt{1+12(\frac{1}{Q^2}-\frac{1}{Q})}\right)\frac{\pi}{6}\mu(n)}n^\frac{11}{8} < \frac{1}{2Q},
\end{align*}
where we replaced $1/Q$ with $1/2Q$ since we will need this inequality in Section \ref{SLS}.
To verify the inequality, it is equivalent to show that
\begin{align}\label{midinequality}
    \frac{e^{\left(1-\sqrt{1+12(\frac{1}{Q^2}-\frac{1}{Q})}\right)\frac{\pi}{6}\mu(n)}}{n^{\frac{11}{8}}}>2\times10^5Q(40.93 Q + 6.292).
\end{align}
Moreover, we recall the following inequality \cite[Eq. 4.5.13]{DLMF}
\begin{align}\label{exp}
    e^x>\left(1+\frac{x}{y}\right)^y,\qquad x,y>0.
\end{align}
Hence, by taking $y=3$ in (\ref{exp}), we get
\begin{align*}
  \frac{e^{\left(1-\sqrt{1+12(\frac{1}{Q^2}-\frac{1}{Q})}\right)\frac{\pi}{6}\mu(n)}}{n^{\frac{11}{8}}}>&
  \frac{1}{n^{\frac{11}{8}}}\left(1+\left(1-\sqrt{1+12(\frac{1}{Q^2}-\frac{1}{Q})}\right)\frac{\pi}{18}\mu(n)\right)^3\\
  >& \frac{\pi^3(24n-1)^{\frac{3}{2}}}{18^3n^{\frac{11}{8}}}\left(1-\sqrt{1+12(\frac{1}{Q^2}-\frac{1}{Q})}\right)^3.
\end{align*}
By combining (\ref{midinequality}), it suffices to show that
\begin{align}\label{midinequality2}
    \frac{(24n-1)^{\frac{3}{2}}}{n^{\frac{11}{8}}}>\frac{2\times10^5\times18^3Q(40.93 Q + 6.292)}{\left(\pi-\pi\sqrt{1+12(\frac{1}{Q^2}-\frac{1}{Q})}\right)^3}.
\end{align}
Also, if $n\geq2$, then we have
\begin{align*}
   \frac{(24n-1)^{\frac{3}{2}}}{n^{\frac{11}{8}}}>\frac{24^{\frac{3}{2}}(n-1)^{\frac{3}{2}}}{n^{\frac{11}{8}}}=24^{\frac{3}{2}}\left(1-\frac{1}{n}\right)^{\frac{11}{8}}(n-1)^{\frac{1}{8}}\geq\frac{24^{\frac{3}{2}}}{2^{\frac{11}{8}}}(n-1)^{\frac{1}{8}}.
\end{align*}
Hence, by a simple calculation, if we choose the constant
\begin{align*}
    C_Q:=\frac{(1.93\times10^{59})(40.93Q^2+6.292Q)^8}{\left(\pi-\pi\sqrt{1+12(\frac{1}{Q^2}-\frac{1}{Q})}\right)^{24}}+1,
\end{align*}
then (\ref{midinequality2}) holds when $n\geq C_Q>2$. This completes the proof.
\qed

\begin{remark}
From our estimation, the exponent $y$ in $(\ref{exp})$ controls the magnitude of $C_Q$. Hence, it is not hard to see that we can choose the constant 
$C_Q$ so that $C_Q\asymp Q$ for $y$ sufficiently large.
\end{remark}


\vspace{0.10in}

There is a different, combinatorial method which allows us to include the case that $Q$ is even.

We will need the following lemma.

\begin{lemma}\label{lemmaA}
For $n\geq 6$, the cranks of the partitions of $n$ take on exactly the values $-n$ through $n$ except for $-n+1$ and $n-1$.
\end{lemma}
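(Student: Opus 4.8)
The plan is to split on the two cases in the definition of the crank and to treat the extreme values, the two ``forbidden'' values $\pm(n-1)$, and the intermediate values by different means. First I would record the extremes and a global bound. The partition $(n)$ has $o(\lambda)=0$, so its crank is $\lambda_1=n$, while $(1^n)$ has $o(\lambda)=n$ and $\nu(\lambda)=0$, giving crank $-n$; thus $\pm n$ are attained. To see that no crank falls outside $[-n,n]$, note that when $o(\lambda)=0$ the crank equals $\lambda_1\le n$, and when $o(\lambda)\ge 1$ the $\nu(\lambda)$ parts exceeding $o(\lambda)$ are each $\ge 2$ and together sum to at most $n-o(\lambda)$, so $\nu(\lambda)\le (n-o(\lambda))/2$ and the crank $\nu(\lambda)-o(\lambda)$ lies in $[-n,(n-3)/2]$.

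The heart of the argument is that $n-1$ is never attained. If $o(\lambda)=0$ and $\lambda_1=n-1$, then the remaining parts sum to $1$, forcing a part equal to $1$ and contradicting $o(\lambda)=0$. If instead $o(\lambda)\ge 1$ and $\nu(\lambda)-o(\lambda)=n-1$, then the inequality $\nu(\lambda)\le (n-o(\lambda))/2$ from the previous paragraph yields $n-1+o(\lambda)\le (n-o(\lambda))/2$, which simplifies to $n-2+3o(\lambda)\le 0$, impossible for $n\ge 1$. The same bound rules out $-(n-1)$; alternatively one may invoke the symmetry $M(m,n)=M(-m,n)$ of Andrews--Garvan, which follows from the invariance of the crank generating function under $w\mapsto w^{-1}$.

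It then remains to realize every value in $\{0,1,\dots,n-2\}$, the negative values following from the symmetry just mentioned (or from mirror constructions in which one forces $\nu(\lambda)=0$ by taking $c$ ones together with a partition of $n-c$ into parts of size at most $c$). For $3\le c\le n-2$ I would take $\lambda=(c)$ adjoined to a partition of $n-c\ge 2$ into parts equal to $2$ and $3$, which are all $\le c$: this has $o(\lambda)=0$ and crank $\lambda_1=c$. The values $c\in\{0,1,2\}$ I would dispatch by a short explicit case analysis using the second branch of the definition---for instance $(n-1,1)$ has crank $0$ and $(n-3,2,1)$ has crank $1$, while crank $2$ comes from $(2^{n/2})$ when $n$ is even and from $(n-5,2,2,1)$ when $n$ is odd---each valid once $n\ge 6$.

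The main obstacle is that $\nu(\lambda)$ depends on $o(\lambda)$ in a nonlinear way, so, unlike the maximum crank (which is governed purely by $\lambda_1$ in the first branch), the intermediate and negative values cannot be swept out by a single monotone family of partitions. The cleanest way to keep the bookkeeping finite is to exploit the crank symmetry to reduce to nonnegative targets, handle the generic range $3\le c\le n-2$ with the uniform $2$-and-$3$ construction above, and clear the finitely many small values $c\in\{0,1,2\}$ (and the parity split at $c=2$) by hand; the hypothesis $n\ge 6$ is exactly what makes all of these small constructions legitimate.
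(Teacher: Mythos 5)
Your proof is correct overall, and on the positive side it coincides with the paper's: the same construction ($\lambda_1=c$ together with $2$'s and $3$'s summing to $n-c$) for $3\le c\le n-2$, and explicit small partitions for cranks $0,1,2$ with the same parity split at $2$. The genuine divergence is in the negative values. The paper stays entirely self-contained and constructive: for $-c$ with $3\le c\le n-2$ it takes $c$ ones plus $2$'s and $3$'s (forcing $\nu(\lambda)=0$, exactly your parenthetical ``mirror construction''), and it exhibits explicit partitions $(n-2,1,1)$, $(n-3,1,1,1)$, and $2+2+1+1$ for cranks $-1$ and $-2$. You instead invoke the Andrews--Garvan symmetry $M(m,n)=M(-m,n)$, which reduces everything to the nonnegative side at once; this is legitimate and arguably cleaner (it also disposes of $-(n-1)$ instantly once $n-1$ is excluded), but it imports the crank generating-function identity, a nontrivial external result, and one should note the known anomaly at $n=1$ where the generating-function coefficients disagree with the combinatorial crank --- harmless here since $n\ge 6$, but worth a remark. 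Your quantitative bound $\nu(\lambda)\le (n-o(\lambda))/2$ is a nice sharpening of the paper's informal ``$\nu(\lambda)$ is much less than $n$.''

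One sentence of yours is wrong as stated: ``The same bound rules out $-(n-1)$.'' The inequality $\nu(\lambda)\le (n-o(\lambda))/2$ yields only an \emph{upper} bound on the crank, namely $(n-3o(\lambda))/2$, together with the trivial lower bound $-o(\lambda)\ge -n$; the value $-(n-1)$ lies comfortably inside $[-n,(n-3)/2]$, so nothing is ruled out. A direct argument requires an extra step: crank $=-(n-1)$ forces $o(\lambda)=\nu(\lambda)+n-1\ge n-1$, so the parts other than the ones sum to at most $1$, producing another part equal to $1$ and contradicting the count $o(\lambda)$ (the paper argues equivalently: if not every part is $1$ then $\lambda_1\ge 2$, so $o(\lambda)\le n-2$ and the crank is at least $-(n-2)$). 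Since you immediately offer the symmetry $M(m,n)=M(-m,n)$ as an alternative, your proof survives --- but only through that second route, so the first claim should be deleted or repaired.
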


\begin{proof}
It is clear from the definition of crank that a partition $\lambda$ of $n$ cannot have crank larger than $n$, 
since $\lambda_1 \leq n$ and $\nu(\lambda)$ is much less than $n$. The crank cannot be less than $-n$ since $o(\lambda)\leq n$. 
Say there was a partition $\lambda$ with crank $n-1$. Since $\nu(\lambda)$ is much less than $n$, it must be that $o(\lambda)=0$ 
and therefore $\lambda_1 = n-1$, but this implies $\lambda_2=1$, which is a contradiction. 
Now say we have a partition $\lambda$ with crank $-n+1$. If every part of $\lambda$ is 1, then the crank would be $-n$, 
so we must have $\lambda_1\geq 2$. This implies $o(\lambda)\leq n-2$, so the crank can not be $-n+1$.

We have shown that the crank can only take on the claimed values. We now show that it takes on each of those values. 
Let $3\leq k\leq n$ and we will construct a partition $\lambda$ of crank $k$. Let $\lambda_1$ be $k$. 
If $n-k$ is even, then let all the remaining parts be 2. If $n-k$ is odd, then let $\lambda_2$ be 3 and let all the remaining parts be 2. 
Notice that this does not work when $k=n-1$ because 1 cannot be written as a sum of $2s$ and $3s$. 
We can also create a partition of crank $-k$ by letting there be $k$ 1's, and letting the remaining parts be 2 or 3 as before. 
Since $k\geq 3$, we have $\nu(\lambda)=0$, and so the partition has the desired crank. Note that once again this does not work 
when $k=n-1$ for the same reason as before. Now it only remains to find partitions with cranks equal to 2, 1, 0, $-1$, and $-2$. For $n\geq 7$, 
the following partitions work:
\begin{enumerate}[label=$\bullet$]
    \item $n = (n-5) + 2 + 2 + 1$,
    \item $n = (n-3) + 2 + 1$,
    \item $n = (n-1) + 1$,
    \item $n = (n-2) + 1 + 1$,
    \item $n = (n-3) + 1 + 1 + 1$.
\end{enumerate}
For $n=6$, we also must consider the partitions $2+2+2$ and $2+2+1+1$ with cranks $2$ and $-2$ respectively, and for the 1, 0, and $-1$ cases the above partitions still work.
\end{proof}

\vspace{0.10in}

\textbf{Proof of Theorem \ref{exact}}.
For even $Q$ and $n\geq{Q}/{2}+2$, Lemma \ref{lemmaA} implies that the crank takes on at least $Q$ consecutive values, so the crank maps onto each residue class. For $n={Q}/{2}+1$, no partition has crank congruent to ${Q}/{2}$. For $n={Q}/{2}$, no partition has crank congruent to ${Q}/{2}-1$. For lower $n$, no partition has crank congruent to ${Q}/{2}$.

For odd $Q$ and $n={(Q+1)}/{2}$, the residues ${(Q\pm1)}/{2}$ are mapped onto by $-n$ and $n$, and all the other residues are mapped onto by $-n+2$ through $n-2$. For $n>{(Q+1)}/2$, the crank takes on at least $Q$ consecutive values. When $n = {(Q-1)}/{2}$, no partition has crank congruent to ${(Q-3)}/{2}$. For lower $n$, no partition has crank congruent to ${(Q-1)}/{2}$. Thus we have shown that for odd $Q\geq11$ and even $Q\geq8$, the cranks of the partitions of $n$ take on every value modulo $Q$ exactly when $n\geq {(Q+1)}/{2}$ or $n\geq{Q}/{2}+2$ respectively, as desired.
\qed


\section{Strict log-Subadditivity for Crank Functions}\label{SLS}
Bessenrodt and Ono \cite{bessenrodt2016maximal} showed that if  $a,b\geq 1$ and $a+b\geq 9$, then
\begin{align*}
    p(a+b)<p(a)p(b).
\end{align*}
Also, Dawsey and Masri \cite{LM} showed the following similar result for the spt-function,
\begin{align*}
     \textrm{spt}(a+b)<\textrm{spt}(a)\textrm{spt}(b),
\end{align*}
for $(a,b)\neq(2,2)$ or $(3,3)$.

We now prove Theorem \ref{subadditive}, which is an analogous result 
for the crank counting function.

\vspace{0.10in}

\textbf{Proof of Theorem \ref{subadditive}}. 
We first deal with the case $Q<11$. By our bounds on $|R(r,Q;n)|$, when $Q<11$ we have

\begin{align*}
L(Q,n)<M(r,Q;n)<U(Q,n),
\end{align*}
where
\begin{align*}
    L(Q,n)&:=p(n)\left( \frac{1}{Q}-10^5(40.93 Q + 6.292) e^{-\left(1-\frac{1}{Q}\right)\frac{\pi}{6}\mu(n)}n^\frac{11}{8}\right),\\
    U(Q,n)&:=p(n)\left( \frac{1}{Q}+10^5(40.93 Q + 6.292) e^{-\left(1-\frac{1}{Q}\right)\frac{\pi}{6}\mu(n)}n^\frac{11}{8}\right).
\end{align*}
Moreover, by $3\leq Q<11$ and $n\geq263$, we have
\begin{align*}
     L(Q,n)>p(n)\left( \frac{1}{11}-10^5(40.93\times 11 + 6.292) e^{-\frac{\pi}{9}\mu(n)}n^\frac{11}{8}\right)>(0.00306)p(n).
\end{align*}
Similarly, we get
\begin{align*}
    U(Q,n)<p(n)\left( \frac{1}{3}+10^5(40.93\times11 + 6.292) e^{-\frac{\pi}{9}\mu(n)}n^\frac{11}{8}\right)<(1.10213\times10^7)p(n).
\end{align*}
Hence, if $n\geq263$, then we have
\begin{align*}
 (0.00306)p(n)<M(r,Q;n)<(1.10213\times10^7)p(n).
\end{align*}
On the other hand, Lehmer \cite{lehmer1939remainders} gives the following bounds for $p(n)$:
\begin{align*} 
    \frac{\sqrt{3}}{12n}\left(1-\frac{1}{\sqrt{n}}\right)e^{\frac{\pi}{6}\mu(n)}<p(n)< \frac{\sqrt{3}}{12n}\left(1+\frac{1}{\sqrt{n}}\right)e^{\frac{\pi}{6}\mu(n)}.
\end{align*}
Together these give the bounds
\begin{align}\label{keybound}
    (0.00306) \frac{\sqrt{3}}{12n}\left(1-\frac{1}{\sqrt{n}}\right)e^{\frac{\pi}{6}\mu(n)}<M(r,Q;n)
    <(1.10213\times10^7)\frac{\sqrt{3}}{12n}\left(1+\frac{1}{\sqrt{n}}\right)e^{\frac{\pi}{6}\mu(n)}.
\end{align}
Now, we follow the argument in \cite[Section 6]{LM} and let $b=Ca$ for some $C\geq1$. Then by (\ref{keybound}), it follows that
\begin{align*}
    M(r,Q,a)M(r,Q,b)>(0.00306)^2\frac{1}{48Ca^2}\left(1-\frac{1}{\sqrt{a}}\right)\left(1-\frac{1}{\sqrt{Ca}}\right)e^{\frac{\pi}{6}(\mu(a)+\mu(Ca))},
\end{align*}
and
\begin{align*}
    M(r,Q,a+b)<(1.10213\times10^7)\frac{\sqrt{3}}{12(a+Ca)}\left(1+\frac{1}{\sqrt{a+Ca}}\right)e^{\frac{\pi}{6}\mu(a+Ca)}.
\end{align*}
It suffices to show that
\begin{align*}
   T_a(C)>\log\left(V_a(C)\right)+\log\left(S_a(C)\right),
\end{align*}
where
\begin{align*}
T_a(C)&:=\frac{\pi}{6}(\mu(a)+\mu(Ca)-\mu(a+Ca)),\\
    S_a(C)&:=\frac{1+\frac{1}{\sqrt{a+Ca}}}{\left(1-\frac{1}{\sqrt{a}}\right)\left(1-\frac{1}{\sqrt{Ca}}\right)},\\
V_a(C)&:=\frac{(1.10213\times10^7)}{(0.00306)^2}\frac{4\sqrt{3}Ca}{C+1}.
\end{align*}
As functions of $C$, it can be shown that $T_a(C)$ is increasing and $S_a(C)$ is decreasing for
$C \geq 1$, and by combining 
\begin{align*}
    V_a(C)<\frac{(1.10213\times10^7)}{(0.00306)^2}4\sqrt{3}a,
\end{align*}
it suffices to show that
\begin{align}\label{TSV}
    T_a(1)=\frac{\pi}{6}(2\mu(a)-\mu(2a))&>\log\left(\frac{(1.10213\times10^7)}{(0.00306)^2}4\sqrt{3}a\right)+\log\left(\frac{1+\frac{1}{\sqrt{2a}}}{\left(1-\frac{1}{\sqrt{a}}\right)^2}\right)\\\notag
    &=\log\left(\frac{(1.10213\times10^7)}{(0.00306)^2}4\sqrt{3}a\right)+\log\left(S_a(1)\right).
\end{align}
By computing the values $T_a(1)$ and $S_a(1)$, we find that (\ref{TSV}) holds for all $a \geq 396$. 

Hence, if $Q<11$ and $a,b\geq396$, then we have
\begin{align*}
    M(r,Q;a+b) < M(r,Q;a)M(r,Q;b).
\end{align*}

Next, we deal with the case $Q\geq11$.  By our bounds on $|R(r,Q;n)|$, when $Q\geq11$ we have

\begin{align*}
L_2(Q,n)<M(r,Q;n)<U_2(Q,n),
\end{align*}
where
\begin{align*}
    L_2(Q,n)&:=p(n)\left( \frac{1}{Q}-10^5(40.93 Q + 6.292) e^{-\left(1-\sqrt{1 +12(\frac{1}{Q^2}-\frac{1}{Q})}\right)\frac{\pi}{6}\mu(n)}n^\frac{11}{8}\right),\\
    U_2(Q,n)&:=p(n)\left( \frac{1}{Q}+10^5(40.93 Q + 6.292) e^{-\left(1-\sqrt{1 +12(\frac{1}{Q^2}-\frac{1}{Q})}\right)\frac{\pi}{6}\mu(n)}n^\frac{11}{8}\right).
\end{align*}
By the proof of Theorem \ref{surject}, we know that if $n\geq C_Q$, then we have 
\begin{align*}
    |R(r,Q;n)|\leq10^5(40.93 Q + 6.292) e^{-\left(1-\sqrt{1 +12(\frac{1}{Q^2}-\frac{1}{Q})}\right)\frac{\pi}{6}\mu(n)}n^\frac{11}{8}<\frac{1}{2Q}.
\end{align*}
It follows that 
\begin{align*}
    \left(\frac{1}{2Q}\right)p(n)<M(r,Q;n)<\left(\frac{3}{2Q}\right)p(n).
\end{align*}
By the same argument of the case $Q<11$, we need to show that for any $b=Ca$ for some $C\geq1$,
\begin{align*}
   T_a(C)>\log\left(W_a(C)\right)+\log\left(S_a(C)\right),
\end{align*}
where
\begin{align*}
W_a(C):=(6Q)\frac{4\sqrt{3}Ca}{C+1}.
\end{align*}
Moreover, by the trivial bound
\begin{align*}
    W_a(C)<24\sqrt{3}Qa,
\end{align*}
and the same argument, it suffices to show that 
\begin{align}\label{TVW2}
    T_a(1)=\frac{\pi}{6}(2\mu(a)-\mu(2a))&>\log(24\sqrt{3}Qa)+\log\left(\frac{1+\frac{1}{\sqrt{2a}}}{\left(1-\frac{1}{\sqrt{a}}\right)^2}\right)\\\notag
    &=\log(24\sqrt{3}Qa)+\log\left(S_a(1)\right).
\end{align}
On the other hand, if $a\geq2$, then we get
\begin{align}\label{TVW3}
    \log(24\sqrt{3}Qa)+\log\left(S_a(1)\right)<\log(24\sqrt{3}Qa)+\log\left(\frac{1+\frac{1}{\sqrt{4}}}{\left(1-\frac{1}{\sqrt{2}}\right)^2}\right)<\log(432Qa).
\end{align}
Also, if $a\geq (432Q)^2\geq (432\times11)^2$, then we have
\begin{align}\label{TVW4}
      T_a(1)=\frac{\pi}{6}\frac{16a-1}{\sqrt{48a-1}}>2\log a\geq\log a + 2\log432Q>\log(432Qa).
\end{align}
Hence, by combining (\ref{TVW2}), (\ref{TVW3}) and (\ref{TVW4}),
we can choose $a,b\geq (432Q)^2$ to get the desired result.
\qed


\begin{thebibliography}{99}



\bibitem{Atkin}  A. O. L. Atkin, {\em Proof of a conjecture of Ramanujan\/}. Glasgow Math. J. \textbf{8} (1967), 14--32.

\bibitem{andrews1988dyson} G. E. Andrews and F. G. Garvan, {\em Dyson's crank of a partition\/}. 
Bull. Amer. Math. Soc. (N.S.) \textbf{18} (1988), 167--171.

\bibitem{atkin1954some} A. O. L. Atkin and P. Swinnerton-Dyer, {\em Some properties of partitions\/}. Proc. London Math Soc. \textbf{3} (1954), 84--106. 

\bibitem{bessenrodt2016maximal} C. Bessenrodt and K. Ono, {\em Maximal multiplicative properties of partitions\/}. Ann. Comb. \textbf{20} (2016), 59--64. 


\bibitem{Dyson} F. Dyson, {\em Some guesses in the theory of partitions\/}. Eureka (Cambridge) \textbf{8} (1944), 10--15.

\bibitem{GKS} F. Garvan, D. Kim, and D. Stanton, {\em Cranks and $t$-cores\/}. Invent. Math. \textbf{101} (1990), 1--17.

\bibitem{hardy1918asymptotic} G. H. Hardy and S. Ramanujan, {\em Asymptotic formula{\ae} in combinatory analysis.\/} 
Proc. London Math. Soc. \textbf{17} (1918), 75--115. 


\bibitem{lehmer1939remainders} D. H. Lehmer, {\em On the remainders and convergence of the series for the partition function\/}. 
 Trans. Amer. Math. Soc. \textbf{46} (1939), 362--373.

\bibitem{M} R. Masri, {\em Singular moduli and the distribution of partition ranks modulo $2$.\/} Math. Proc. Cambridge Philos. Soc. \textbf{160} (2016), 209--232.



\bibitem{LM} M. L. Dawsey and R. Masri, {\em Effective bounds for the Andrews smallest parts function.\/} Forum Mathematicum \textbf{31} (2019), 743--767.



\bibitem{DLMF} NIST Digital Library of Mathematical Functions. \url{http://dlmf.nist.gov/}, Release 1.0.23 of 2019-06-15. F. W. J. Olver, A. B. Olde Daalhuis, D. W. Lozier, B. I. Schneider, R. F. Boisvert, C. W. Clark, B. R. Miller, and B. V. Saunders, eds.

\bibitem{Ono} K. Ono, {\em Distribution of the partition function modulo $m$.\/} Ann. of Math. (2), \textbf{151} (2000), 293--307.


\bibitem{rolon2013asymptotische} J. M. Zapata Rol\'on, {\em Asymptotische Werte von Crank-Differenzen\/} ({\em Asymptotic~ values~ of ~crank~ differences\/}). Diploma thesis, 2013. 

\bibitem{Rolon} J. M. Zapata Rol\'on, {\em Asymptotics of crank generating functions and Ramanujan congruences.\/} 
Ramanujan J. \textbf{38} (2015), 147--178.

\bibitem{Ra1} S. Ramanujan, {\em Some properties of $p(n)$, the number of partitions of $n$.\/} Proc. Cambridge Philos. Soc. \textbf{19} (1919), 207--210.
\bibitem{Ra2} S. Ramanujan, {\em Congruence properties of partitions.\/} Math. Z. \textbf{9} (1921), 147--153.

\bibitem{Wat} G. N. Watson, {\em Ramanujans Vermutung \"uber Zerf\"allungsanzahlen.\/} J. Reine Angew. Math. \textbf{179} (1938), 97--128.




\end{thebibliography}
\end{document}